\documentclass[reqno]{amsart}
\usepackage{amsmath,amssymb,amsthm}
\usepackage{tabularx,booktabs,multirow}
\usepackage{enumerate,enumitem}
\usepackage{color,graphicx,subfigure}

\DeclareGraphicsExtensions{.pdf,.png,.eps}

\newtheorem{theorem}{Theorem}
\newtheorem{lemma}[theorem]{Lemma}

\newtheorem{claim}{Claim}
\textwidth = 390pt

\newtheoremstyle{definition}
  {4pt}
  {4pt}
  {\sl}
  {}
  {\bfseries}
  {.}
  {.5em}
  {}

\theoremstyle{definition}

\theoremstyle{remark}

\newtheoremstyle{introthms}
  {3pt}
  {3pt}
  {\itshape}
  {}
  {\bfseries}
  {.}
  {.5em}
  {\thmnote{#3}}
\theoremstyle{introthms}

\newcommand{\cC}{\mathcal{C}}
\newcommand{\cD}{\mathcal{D}}
\newcommand{\Nat}{\mathbb{N}}

\begin{document}
\title{Spectrum of mixed bi-uniform hypergraphs}

\author[M. Axenovich]{Maria Axenovich}
\address{Karlsruher Institut f\"ur Technologie, Karlsruhe, Germany
}
\email{maria.aksenovich@kit.edu} 
\author[E. Cherubini]{Enrica Cherubini}
\address{Karlsruher Institut f\"ur Technologie, Karlsruhe, Germany
}
\author[T. Ueckerdt]{Torsten Ueckerdt}
 \address{Karlsruher Institut f\"ur Technologie, Karlsruhe, Germany
}
\email{torsten.ueckerdt@kit.edu}

\date{\today}
\begin{abstract}
 A mixed hypergraph is a triple $H=(V,\cC,\cD)$, where $V$ is a set of vertices, $\cC$ and $\cD$ are sets of hyperedges. 
 A vertex-coloring of $H$ is proper if $C$-edges are not totally multicolored and  $D$-edges are not monochromatic.
 The feasible set $S(H)$ of $H$ is the set of all integers, $s$, such that $H$ has a proper coloring with $s$ colors.
 
 Bujt\'as and Tuza [\textit{Graphs and Combinatorics} \textbf{24} (2008), 1--12] 
 gave a characterization of  feasible sets for mixed hypergraphs with all $C$- and $D$-edges of the same size $r$,   $r\geq 3$.

 In this note, we give a short proof of a complete characterization of all possible feasible sets for mixed hypergraphs with all   $C$-edges  of size  $\ell$ and all $D$-edges of size   $m$, where $\ell, m \geq 2$.
 Moreover, we show that for every sequence $(r(s))_{s=\ell}^n$, $n \geq \ell$, of natural numbers there exists such a  hypergraph with exactly $r(s)$ proper colorings using  $s$ colors, $s = \ell,\ldots,n$, and no proper coloring with more than $n$ colors.
 Choosing $\ell = m=r$ this answers a question of Bujt\'as and Tuza, and generalizes their result with a shorter proof.
\end{abstract}

\maketitle

\emph{Keywords}: mixed hypergraph, vertex coloring, spectrum, feasible set.

\section{Introduction}
 
A mixed hypergraph is a triple $H=(V,\cC,\cD)$, where $V$ is a set of vertices, $\cC$ is a set of subsets of vertices, called $C$-edges, and $\cD$ is a set of subsets of vertices, called $D$-edges.
A coloring $c: V \rightarrow \Nat$ of $H$ is proper if each $C$-edge has at least two vertices of the same color, and each $D$-edge has at least two vertices of distinct colors, i.e., $C$-edges are not rainbow and $D$-edges are not monochromatic.
An \emph{$s$-coloring} of $H$ is a coloring using exactly $s$ colors.
The notion of mixed hypergraphs was introduced by Voloshin~\cite{V}.
Their importance was emphasized by Kr\'al, who showed connections between feasible coloring of mixed hypergraphs and other classical coloring problems such as list-colorings of graphs, see~\cite{K2007}. 
One of the important parameters of a mixed hypergraph $H$, introduced in~\cite{V}, is its \emph{chromatic spectrum}, that is the sequence $(r_s(H))_{s=1}^n$, where $n = |V(H)|$, with
\[
 r_s(H) = \# \text{ non-isomorphic proper $s$-colorings of $H$}
\]
for $s = 1,\ldots,n$.
Here, two colorings are \emph{isomorphic} if they are the same up to relabeling the colors.
The set $\{s \in \Nat \mid r_s(H) \neq 0\}$ is called the \emph{feasible set} of $H$, i.e., it is the set of natural numbers
\[
 S(H)=\{s: \mbox{ there is a proper $s$-coloring of } H \}.
\]

Kr\'al proved that for any finite $S \subset \Nat$, $1 \notin S$, and any function $r:S \to \Nat \setminus \{0\}$ there is a mixed hypergraph $H$ with $S(H) = S$ and $r_s(H) = r(s)$ for every $s\in S$.
This extended a previous result by Jiang \textit{et al.}~\cite{J}.
Bujt\'as and Tuza~\cite{BT} described the feasible sets of \emph{uniform mixed hypergraphs}, i.e., where $C$- and $D$-edges have the same cardinality.
To state the results, we always shall assume that the mixed hypergraphs are non-empty, i.e., $\cC \cup \cD \neq \emptyset$.
We denote the interval of integers $\{i,i+1,\ldots,j\}$, $i \leq j$, as $[i,j]$.

\begin{theorem}[Bujt\'as, Tuza~\cite{BT}]{\ \\}
 Let $r\geq 3$ be an integer, and $S$ be a finite set of natural numbers. There is a non-empty  $r$-uniform mixed hypergraph $H=(V,\cC,\cD)$ with $S(H)=S$ if and only if
 \begin{enumerate}[label = (A\arabic*)]
  \item $S=[1,x]$, for some $x\geq r-1$, or\label{enum:interval-1}
  \item $\min(S) \geq r$, or\label{enum:min-r}
  \item $\min(S) = a \in [2,r-1]$, and $[a,r-1]\subseteq S$.\label{enum:general}
 \end{enumerate}
 
\end{theorem}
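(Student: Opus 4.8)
First observe that $1 \in S(H)$ if and only if $\cD = \emptyset$: a $1$-coloring makes every edge monochromatic, so it is proper exactly when there are no $D$-edges. I would then establish two elementary ``local move'' facts. \emph{(Splitting.)} If $c$ is a proper coloring using $s \le r-2$ colors, then some color class has at least two vertices (as $s < r \le |V|$); recoloring one such vertex with a fresh color yields a proper $(s+1)$-coloring, since no $D$-edge can become monochromatic and every $C$-edge now sees at most $s+1 \le r-1 < r$ colors, hence is not rainbow. \emph{(Merging, when $\cD=\emptyset$.)} Identifying two color classes can only decrease the number of colors on a $C$-edge, so from any proper $s$-coloring with $s \ge 2$ we obtain a proper $(s-1)$-coloring.

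These give the trichotomy. If $\cD = \emptyset$, then $1 \in S$; moreover every coloring with at most $r-1$ colors is automatically proper (a $C$-edge on $r$ vertices cannot be rainbow with fewer than $r$ colors), so $[1,r-1]\subseteq S$, and Merging forces $S$ to be an initial interval $[1,x]$ with $x = \bar\chi(H) \ge r-1$, which is case \ref{enum:interval-1}. If $\cD \ne \emptyset$, then $1 \notin S$ and $a:=\min S \ge 2$. Either $a \ge r$, which is case \ref{enum:min-r}, or $a \in [2,r-1]$, in which case applying Splitting repeatedly to a proper $a$-coloring produces proper colorings with $a+1, \dots, r-1$ colors, so $[a,r-1]\subseteq S$, which is case \ref{enum:general}.

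\textbf{Sufficiency.} Case \ref{enum:interval-1} is immediate: on $x+1$ vertices take $\cD=\emptyset$ and a single $C$-edge $e$ (any $r$ vertices). The all-distinct coloring is the only $(x+1)$-coloring and it makes $e$ rainbow, so $\bar\chi \le x$; coloring two vertices of $e$ alike and the rest distinctly realizes $x$ colors; and, as above, $S$ is a downward-closed interval containing $[1,r-1]$, hence $S=[1,x]$. For cases \ref{enum:min-r} and \ref{enum:general} I would assemble $H$ from gadgets on a common vertex set. As a \emph{lower-bound gadget} forcing $\chi(H)=a$, I would use that an $r$-uniform $D$-hypergraph has chromatic number at most $\lceil |V|/(r-1)\rceil$: taking the complete $r$-uniform $D$-hypergraph on $(a-1)(r-1)+1$ vertices, whose proper colorings are exactly those with all color classes of size at most $r-1$, forces at least $a$ colors and, in isolation, realizes the interval $[a,\,(a-1)(r-1)+1]$. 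The remaining task is to install $C$-edges (and possibly further vertices) that cap the number of colors at $n=\max S$ and delete every value of $[a,n]\setminus S$, while crucially preserving at least one coloring for each count in $S$.

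\textbf{Main obstacle.} The heart of the proof is this last step, and it is genuinely delicate because size-$r$ edges are weak: a $D$-edge cannot force two prescribed vertices to differ, and a $C$-edge cannot force two prescribed vertices to agree, so one cannot simply wire up a target palette. Note that Splitting already forbids deleting any value below $r$, which is exactly why \ref{enum:general} requires $[a,r-1]\subseteq S$; deletions are attempted only in the free range $\{\max(r-1,a),\dots,n\}$. I would handle the cap and the gaps together by designing the $C$-edges so that proper colorings are pushed into a rigid block structure --- for instance a palette set of $n$ vertices together with auxiliary vertices whose colors are pinned, through carefully overlapping $C$-edges, to the palette colors, so that the global color count becomes a function of a few local choices that can be switched on and off. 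Checking that such a configuration simultaneously (i) admits an $s$-coloring for every $s\in S$ and (ii) admits none for $s\notin S$ is the technical crux, and I expect to carry it out by induction on $|[a,n]\setminus S|$, at each step adjoining one gadget that removes a single target value $s\ge r-1$ while certifying that all surviving values remain realizable.
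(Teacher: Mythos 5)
Your necessity argument is correct and complete, and it is essentially the paper's own (Lemma~\ref{lem:easy-facts}, stated there for $(\ell,m)$-uniform hypergraphs): the equivalence $1 \in S(H) \Leftrightarrow \cD = \emptyset$, the splitting move (valid because with at most $r-1$ colors in total no $r$-set can be rainbow), and the merging move when $\cD = \emptyset$ together give the trichotomy. Your construction for case~\ref{enum:interval-1} also works; the paper uses all $r$-sets through two fixed vertices, but a single $C$-edge on $x+1$ vertices does the same job.

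The gap is in cases~\ref{enum:min-r} and~\ref{enum:general}: there you describe a strategy, not a proof, and the step you defer as the ``technical crux'' is the entire content of the theorem. Two concrete problems. First, the pinning mechanism you invoke --- auxiliary vertices whose colors are forced, via overlapping $C$-edges, to equal palette colors --- is never constructed, and as you yourself observe, a single $r$-uniform $C$-edge cannot force two prescribed vertices to agree; whether any family of such edges can simulate pinning is precisely the question at stake, not a verification to be postponed. Second, your induction ``adjoin one gadget per deleted value'' requires each gadget to destroy \emph{all} proper $s$-colorings for one value $s$ while preserving at least one coloring for every surviving value; starting from your lower-bound gadget (the complete $r$-uniform $D$-hypergraph), whose proper colorings are \emph{all} partitions into classes of size at most $r-1$, there are enormously many $s$-colorings to destroy, and you give no argument that this can be done without collateral damage to neighboring values. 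The paper works in the opposite order: it first makes colorings rigid, so that deletion of interior values is never needed. Concretely, it builds a grid hypergraph $H(\ell,m,a,b)$ on $[a]\times[b]\times[m]$, where $C$-edges are the $\ell$-sets meeting some column twice and $D$-edges are the $m$-sets contained in no row and no column, and proves (Theorem~\ref{thm:block}) that every proper coloring either uses between $a$ and $\ell-1$ colors or is a column-coloring with exactly $b$ colors, so its feasible set is exactly $[a,\ell-1]\cup\{b\}$. Sets of type~\ref{enum:min-r} and~\ref{enum:general} are then realized by taking one disjoint block per element $b_i \geq r$ of $S$ (with multiplicity, which is what yields the spectrum statement) and adding four explicit families of synchronizing $C$-edges forcing every proper coloring with at least $\ell$ colors to be one of $q$ canonical block colorings (Theorem~\ref{thm:general}); when $\min(S) \geq r$, one extra $D$-edge placed inside a row kills the residual value $r-1$. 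To complete your proof you would need to exhibit a construction with this kind of rigidity; nothing in your outline yet provides it.
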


Zhao \textit{et al.}~\cite{ZDW} proved that if $r=3$ and $S$ is of type~\ref{enum:min-r} or~\ref{enum:general}, then for every function $r:S \to \Nat \setminus 0$ there is a $3$-uniform mixed hypergraph $H$ with $S(H) = S$ and $r_s(H) = r(s)$ for every $s \in S$.

A mixed hypergraph $H = (V,\cC,\cD)$ is \emph{$(\ell,m)$-uniform} if every $C$-edge has size $\ell$ and every $D$-edge has size $m$, i.e., $\cC \subseteq \binom{V}{\ell}$ and $\cD \subseteq \binom{V}{m}$. Here, we provide a complete characterization of feasible sets for $(\ell,m)$-uniform mixed hypergraphs for any $\ell, m \geq 2$. We also characterize the spectra of such hypergraphs in the range where at least $\ell$ colors are used.
  
\begin{theorem}\label{thm:main}
 Let $\ell,m \geq 2$ be integers, and $S$ be a finite set of natural numbers. There is a non-empty $(\ell,m)$-uniform mixed hypergraph $H = (V,\cC,\cD)$ with $S(H) = S$ if and only if
 \begin{enumerate}[label = (B\arabic*)]
  \item $S = [1,x]$ for some $x \geq \ell-1$, or\label{enum:new-interval-1}
  \item $\ell = 2$ and $S$ is an arbitrary interval, or\label{enum:new-interval}
  \item $\ell \geq 3$ and $\min(S) \geq \ell$, or\label{enum:new-min-ell}
  \item $\ell \geq 3$ and $\min(S) = a \in [2,\ell-1]$ and $[a,\ell-1]\subseteq S$.\label{enum:new-general}
 
 \end{enumerate}
 Moreover, if $r:S \to \Nat \setminus 0$ is a function, and $S$ is of type~\ref{enum:new-min-ell} or~\ref{enum:new-general}, then there is such a hypergraph $H$ with $S = S(H)$ and $r_s(H) = r(s)$ for every $s \in S$, $s \geq \ell$.
 
\end{theorem}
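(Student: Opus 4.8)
The plan is to prove the biconditional and the spectrum statement separately, and within the biconditional to split according to whether $\cD=\emptyset$, whether $\ell=2$, and whether $\ell\ge 3$. For necessity, let $H=(V,\cC,\cD)$ be non-empty with $S(H)=S\neq\emptyset$. If $\cD=\emptyset$, the monochromatic coloring is proper, so $1\in S$; merging two color classes can never turn a non-rainbow $C$-edge rainbow, so $S$ is downward closed and thus $S=[1,\bar\chi]$, and since no $\ell$-set can be rainbow on fewer than $\ell$ colors every $(\ell-1)$-coloring is proper, giving $\bar\chi\ge\ell-1$ and type~\ref{enum:new-interval-1}. If $\cD\neq\emptyset$ then $1\notin S$. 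For $\ell=2$ every $C$-edge forces its endpoints equal, so each component of the graph $(V,\cC)$ is monochromatic; contracting components leaves only images of $D$-edges (none inside a single component, as $S\neq\emptyset$), and since splitting one vertex off a color class never creates a monochromatic $D$-edge the feasible set is upward closed above its minimum, hence an interval, giving type~\ref{enum:new-interval}. For $\ell\ge 3$ I use that a coloring with at most $\ell-1$ colors automatically makes every $\ell$-set non-rainbow; thus for $s\le\ell-1$ one has $s\in S$ iff $s$ is feasible for the pure $D$-hypergraph $(V,\cD)$, whose feasible set is the interval $[\chi_{\cD},|V|]$ by the same splitting argument (here $\chi_{\cD}\ge 2$ is the least number of colors avoiding a monochromatic $D$-edge). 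Hence $S\cap[1,\ell-1]$ is either empty, forcing $\min(S)\ge\ell$ (type~\ref{enum:new-min-ell}), or equals $[\chi_{\cD},\ell-1]$, in which case $\min(S)=\chi_{\cD}\in[2,\ell-1]$ and $[\min(S),\ell-1]\subseteq S$ (type~\ref{enum:new-general}).

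For sufficiency of~\ref{enum:new-interval-1}, take $\cD=\emptyset$, $V=[x+1]$ and the single $C$-edge $\{1,\dots,\ell\}$: the only obstruction is that the all-distinct $(x+1)$-coloring makes this edge rainbow, so $\bar\chi=x$ and $S=[1,x]$. For~\ref{enum:new-interval} with $S=[a,b]$, $a\ge 2$ (the case $a=1$ being~\ref{enum:new-interval-1}), necessity already guarantees that the feasible set will be an interval, so it suffices to build a $(2,m)$-uniform hypergraph with $\chi=a$ and $\bar\chi=b$: take $b$ groups of $m-1$ vertices, each group internally linked by $2$-element $C$-edges so that it is monochromatic and $\bar\chi=b$, and for every pair $i<j$ in $[a]$ add one $D$-edge formed by all $m-1$ vertices of group $i$ and one vertex of group $j$; this edge is monochromatic exactly when groups $i$ and $j$ share a color, so groups $1,\dots,a$ behave like $K_a$ and $\chi=a$.

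For the two remaining types I take $V=[N]$ and read a proper coloring as the partition of $[N]$ into its color classes, so that $r_s(H)$ is exactly the number of proper partitions into $s$ blocks: a $C$-edge forbids partitions splitting some $\ell$-set into $\ell$ distinct blocks, and a $D$-edge forbids partitions placing some $m$-set inside one block. The lower end is produced by a core: a complete $m$-uniform $D$-hypergraph on a set $Y$ of $(a-1)(m-1)+1$ vertices, where $a=\min(S)$, forces every block to meet $Y$ in at most $m-1$ vertices and hence $\min(S)\ge a$. For type~\ref{enum:new-general} the decisive point is that $C$-edges are inactive below $\ell$ colors, so for each $s\in[a,\ell-1]$ any partition into $s$ blocks that cuts $Y$ into parts of size at most $m-1$ is automatically proper; this yields $[a,\ell-1]\subseteq S$, and since the theorem prescribes $r_s$ only for $s\ge\ell$ the (uncontrolled) multiplicities in this low range do not matter.

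It remains to work at $s\ge\ell$, where $C$-edges become active, and realize the prescribed values of $S$ together with their exact multiplicities $r(s)$. The plan is to attach, for each target $s\in S$ with $s\ge\ell$, a block of fresh vertices with size-$\ell$ $C$-edges and size-$m$ $D$-edges whose proper partitions in the active range use exactly $s$ blocks, tuning their number to $r(s)$ through $r(s)$ symmetric completions, while excluding every integer strictly between consecutive elements of $S$ by arranging that each partition with such a block-count splits some $C$-edge. I expect the main obstacle to be exactly this simultaneous control of support (the gaps) and of multiplicities under the rigidity of fixed edge sizes: for $\ell\ge 3$ a size-$\ell$ $C$-edge only forbids an $\ell$-fold rainbow and cannot force two vertices to share a color, and dually a size-$m$ $D$-edge cannot force two vertices into different blocks, so the customary equality and separation gadgets are unavailable and the proper-partition counts must instead be sculpted directly through overlapping $C$-edge families in the spirit of covering designs—all the while verifying that these upper-range blocks neither lower the forced minimum $a$ nor disturb the interval $[a,\ell-1]$ secured by the core.
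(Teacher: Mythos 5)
Your necessity argument and your constructions for types~\ref{enum:new-interval-1} and~\ref{enum:new-interval} are correct and essentially the paper's own route (Lemma~\ref{lem:easy-facts} plus the first two gadgets in the proof of Theorem~\ref{thm:main}); your variants (a single $C$-edge on $[\ell]$; groups of $m-1$ vertices) work just as well. The genuine gap is in the only hard part of the theorem: sufficiency for types~\ref{enum:new-min-ell} and~\ref{enum:new-general} together with the prescribed counts $r(s)$ for $s\ge\ell$. There your text is a plan, not a proof: you announce that you will ``attach, for each target $s\in S$ with $s\ge\ell$, a block of fresh vertices whose proper partitions in the active range use exactly $s$ blocks, tuning their number to $r(s)$,'' and you yourself flag the simultaneous control of gaps and multiplicities as ``the main obstacle,'' but you never exhibit such a block, never prove that its feasible values above $\ell-1$ consist of the single number $s$, and never address why a union of blocks for different targets admits no hybrid proper colorings (one block colored one way, another block a different way) that would create feasible values outside $S$ or extra non-isomorphic colorings inflating $r_s(H)$. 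Even your claim that in the range $[a,\ell-1]$ ``any partition that cuts $Y$ into parts of size at most $m-1$ is automatically proper'' silently assumes that the unspecified blocks contribute no monochromatic $D$-edge, so it too rests on the missing construction.

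What has to be supplied is exactly the paper's core content. The paper builds the grid hypergraph $H(\ell,m,a,b)$ on $[a]\times[b]\times[m]$, with $C$-edges the $\ell$-sets containing two vertices of a common column and $D$-edges the $m$-sets contained in no row and no column, and proves (Theorem~\ref{thm:block}) that its feasible set is $[a,\ell-1]\cup\{b\}$ and, crucially, that every proper $a$-coloring is a row-coloring and every proper $b$-coloring is a column-coloring. It then glues copies $H(\ell,m,a,b_1),\dots,H(\ell,m,a,b_q)$ by four explicit families $\cC_1,\dots,\cC_4$ of extra $C$-edges which force, for any proper coloring, either at most $\ell-1$ colors in total or equality with one of the $q$ canonical colorings ${\bf c_i}(H)$ (Theorem~\ref{thm:general}); this rigidity is what simultaneously produces the gaps above $\ell-1$ and the exact multiplicities $r_s(H)=r(s)$, obtained by listing each $s\in S$ exactly $r(s)$ times among the $b_i$. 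Finally, for type~\ref{enum:new-min-ell} one must also kill every coloring with fewer than $\ell$ colors; the paper does this by taking $a=\ell-1$ and adding one $D$-edge inside a row but not inside any column (legitimate precisely because row-colorings are the only proper $(\ell-1)$-colorings, after which Lemma~\ref{lem:easy-facts} excludes everything below $\ell$), whereas your alternative---a core with $D$-chromatic number $\min(S)\ge\ell$---handles the lower bound but again leaves its interaction with the hypothetical blocks unverified. Until blocks with these rigidity properties and the synchronizing edges are actually constructed and analyzed, the proposal does not establish~\ref{enum:new-min-ell},~\ref{enum:new-general}, or the ``moreover'' statement.
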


Note that in case~\ref{enum:new-min-ell} and~\ref{enum:new-general} the constructed hypergraph $H$ does not necessarily satisfy $r_s(H) = r(s)$ when $s < \ell$.
In fact, when $s < \ell$ one \emph{can not} prescribe the value of $r_s(H)$, as for example there is no $(\ell,m)$-uniform mixed hypergraph $H$ with $r_s(H) \geq 1$ and $r_{s+1}(H) = 1$ for $s+1 < \ell \leq |V(H)|$.
Indeed, taking any proper $s$-coloring and partitioning its largest color class in different ways into two non-empty parts, provides at least two different proper $(s+1)$-colorings.

We shall prove Theorem~\ref{thm:main} in Section~\ref{sec:proofs} below by following the same underlying approach as Bujt\'as and Tuza~\cite{BT},  Zhao \textit{et al.}~\cite{ZDW}, as well as Cherubini \cite{C}.
First, we provide an easy argument for the necessity of~\ref{enum:new-interval-1}--\ref{enum:new-general}.
Then we construct, for any $m \geq 2$, $\ell \geq 3$ and $1 < a < \ell \leq b$, an $(\ell,m)$-uniform hypergraph $H = H(\ell,m,a,b)$ with $S(H) = [a,\ell-1] \cup \{b\}$ and $r_b(H) = 1$.
Finally, we construct a more general hypergraph for any given feasible set of type~\ref{enum:new-min-ell} or~\ref{enum:new-general} and any given chromatic spectrum whose ``building blocks'' are the hypergraphs constructed before.

\section{Proofs}\label{sec:proofs}

We start with some easy observations.
  
\begin{lemma}\label{lem:easy-facts}
 Let $H = (V,\cC,\cD)$ be an $(\ell,m)$-uniform mixed hypergraph with at least $\ell-1$ vertices.
 \begin{itemize}[leftmargin=2em]
  \item If $1 \in S(H)$, then $S(H)=[1, x]$, $x\geq \ell-1$.
  \item If $\ell = 2$, then $S(H)$ is an interval.
  \item If $a \in S(H)$ for $2 \leq a \leq \ell-1$, then $[a,\ell-1] \subseteq S(H)$.  
 \end{itemize} 
\end{lemma}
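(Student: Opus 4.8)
The plan is to rely on two elementary recoloring operations together with one pigeonhole observation. The \emph{pigeonhole observation} is that whenever a coloring uses at most $\ell-1$ colors, every $C$-edge (having $\ell$ vertices) automatically contains two equally-colored vertices, so the $C$-edge constraints are vacuous below $\ell$ colors. The two operations are \emph{merging} two color classes into one (decreasing the number of colors by one) and \emph{splitting} a color class of size at least two into two nonempty parts (increasing the number of colors by one). Merging never creates a rainbow $C$-edge, since it only shrinks the palette seen on every edge; splitting never creates a monochromatic $D$-edge, since if $u,v\in e$ had distinct colors before the split, at most one of those two colors is the one being split, so $u,v$ still receive distinct colors afterwards. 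These two facts are what I would verify first, as they drive all three bullets.

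For the first bullet I would argue that $1\in S(H)$ forces $\cD=\emptyset$: the single-color coloring is proper only if no $D$-edge is monochromatic, yet every $D$-edge is monochromatic under a $1$-coloring, so there can be no $D$-edges (and hence $\cC\neq\emptyset$ by non-emptiness). With no $D$-edges, merging is always admissible, so $S(H)$ is downward closed and therefore equals $[1,\max S(H)]$. To see $\max S(H)\ge \ell-1$, note that since $|V|\ge \ell-1$ one can realize exactly $s$ colors for each $s\le \ell-1$, and any such coloring is proper by the pigeonhole observation (and the absence of $D$-edges). Hence $S(H)=[1,x]$ with $x\ge \ell-1$.

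The third bullet is the cleanest application of splitting confined below $\ell$ colors. Starting from a proper $a$-coloring with $2\le a\le \ell-1$, I would repeatedly split a color class of size at least two; this is possible as long as the current number of colors $s$ satisfies $s<\ell-1\le |V|$, so that some class has size at least two. Each split keeps all $D$-edges non-monochromatic, and because the color count never exceeds $\ell-1$ the pigeonhole observation keeps all $C$-edges non-rainbow. Thus every $s\in[a,\ell-1]$ is feasible, giving $[a,\ell-1]\subseteq S(H)$.

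The main obstacle is the second bullet, where $C$-edges of size $2$ act as hard equality constraints and merging can no longer be used freely. Here I would pass to a reduced hypergraph: since $\{u,v\}\in\cC$ forces $c(u)=c(v)$, every connected component of the graph $(V,\cC)$ is monochromatic in any proper coloring, so contracting these components yields a hypergraph $H'$ on the set $V'$ of components carrying only $D$-edges (the images of the original $D$-edges). Proper colorings of $H$ then correspond bijectively to proper colorings of $H'$ using the same number of colors, whence $S(H)=S(H')$. It remains to check that a $D$-only hypergraph has an interval feasible set: if some image edge is a singleton then $S(H')=\emptyset$; otherwise every image edge has size at least two, the all-distinct coloring is proper so $\max S(H')=|V'|$, and the splitting operation (always admissible once there are no $C$-edges) climbs from the minimum up to $|V'|$ without gaps, giving $S(H')=[\min S(H'),|V'|]$. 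I expect the only delicate points to be stating the contraction correspondence precisely and handling the degenerate cases, namely $\cD=\emptyset$ and an image edge of size one.
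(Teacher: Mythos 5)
Your proposal is correct and follows essentially the same approach as the paper: merging color classes when $\cD=\emptyset$ for the first bullet, contracting the connected components of $(V,\cC)$ for the second, and splitting color classes below the pigeonhole threshold $\ell-1$ for the third. The only difference is that you spell out why a $D$-only hypergraph has an interval feasible set (splitting up to the all-distinct coloring), a fact the paper simply asserts.
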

\begin{proof}
  If $1 \in S(H)$, then $\cD = \emptyset$. Consider any proper $x$-coloring of $H$ with $x = \max(S(H))$. Observe that merging two color classes results again in a proper coloring of $H$, as $\cD = \emptyset$. Thus $S(H) = [1,x]$. On the other hand, any coloring with $\ell-1$ colors is a proper coloring of $H$. Thus $x \geq \ell-1$.
  
  If $\ell = 2$, then in every proper coloring of $H$ all vertices in the same connected component of the graph $G = (V,\cC)$ receive the same color. Hence we have $S(H) = S(H')$, where $H'$ is the hypergraph that arises from $H$ by contracting all vertices in the same connected component of $G$ into a single vertex and removing all $C$-edges. Since $H'$ has only $D$-edges, $S(H')$ is an interval. Note that $H'$, and hence $H$, is uncolorable if $H'$ contains a $D$-edge of size $1$.
  
 For $a \in S(H)$, $2 \leq a \leq \ell-1$, consider any proper $a$-coloring $c$ of $H$. If $a = \ell-1$ there is nothing to show. Otherwise, if we subdivide color classes in $c$ into new color classes so that the resulting number of colors is at most $\ell-1$, then the coloring is still proper.
  Indeed, each $C$-edge has $\ell$ vertices, thus at least two vertices of the same color. Each $D$-edge still uses distinct colors on its vertices.
  Such a subdivision exists whenever $H$ has at least $\ell-1$ vertices.
\end{proof}

\vskip 1cm

\subsection{Construction of a hypergraph $H = H(\ell,m,a,b)$ for $1<a<\ell \leq b$.}{\ \\}
Let $V = V(H) = X\times Y \times Z$, where $X=[a]$, $Y=[b]$ and $Z = [m]$.
We shall also refer to $\{x\}\times Y \times Z$ as a \emph{row}, and $X\times \{y\} \times Z$ as a \emph{column} of $H$.
The set $\cC$ of $C$-edges consists of all $\ell$-element sets containing at least $2$ vertices from the same column.
The set $\cD$ of $D$-edges consists of all $m$-element sets that are \emph{not} completely contained in any row and \emph{not} completely contained in any column.
  
We say that a coloring is a \emph{row-coloring} of $H$ if it assigns the same color to all vertices in each  row such that any two vertices from distinct rows get distinct colors.
We say that a coloring is a \emph{column-coloring} of $H$ if it assigns the same color to all vertices in each column such that any two vertices from distinct columns get distinct colors. 
Note that both row- and column-coloring are proper colorings of $H$.
Indeed, every $C$-edge contains two vertices from the same column by definition and two vertices from the same row since $a < \ell$.
Every $D$-edge contains two vertices from distinct rows and two vertices from distinct columns by definition.
We refer to Figure~\ref{fig:hypergraph1} for an illustration.

\begin{figure}[htb]
 \centering
 \subfigure[\label{fig:graph}]{
  \includegraphics{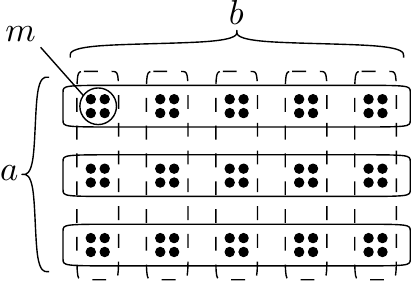}
 }
 \subfigure[\label{fig:row-coloring}]{
  \includegraphics{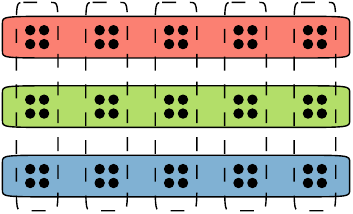}
 }
 \subfigure[\label{fig:column-coloring}]{
  \includegraphics{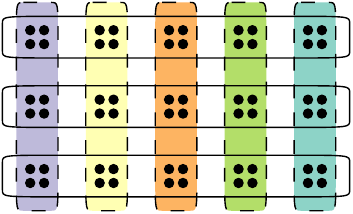}
 }
 \caption{Hypergraph $H(\ell,m,a,b)$ with $a = 3$, $b = 5$ and $m = 4$. Rows are highlighted in solid and columns in dashed. A row-coloring and a column-coloring are indicated in~\subref{fig:row-coloring} and~\subref{fig:column-coloring}, respectively.}
 \label{fig:hypergraph1}
\end{figure}

\begin{theorem}\label{thm:block}
 If $H = H(\ell,m,a,b)$, $1<a<\ell \leq b$, then $S(H) = [a,\ell-1] \cup \{b\}.$
 Moreover, every proper $a$-coloring of $H$ is a row-coloring and every proper $b$-coloring of $H$ is a column-coloring.
\end{theorem}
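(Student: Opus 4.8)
The plan is to establish both inclusions defining $S(H)=[a,\ell-1]\cup\{b\}$ together with the two rigidity statements (every proper $a$-coloring is a row-coloring, every proper $b$-coloring a column-coloring), organizing the argument around the number $s$ of colors used. The backbone is a single auxiliary observation: \emph{in any proper coloring of $H$, every color class of size at least $m$ lies within a single row or within a single column.} To see this I would first record a purely combinatorial fact about the grid $[a]\times[b]$: if a set of cells has the property that any two of them share a row or a column, then all of them lie in one common row or one common column (pick two cells; being distinct they share, say, a row, and every further cell, sharing a coordinate with each of these two distinct cells, must then lie in that same row). Projecting a color class $T$ onto its occupied cells, if $T$ is contained in neither a single row nor a single column this fact produces two vertices of $T$ in distinct rows and distinct columns; any $m$-subset of $T$ containing them (which exists once $|T|\ge m$) is then neither within a row nor within a column, i.e.\ a $D$-edge, and it is monochromatic — contradicting properness.

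For the regime $s\ge\ell$ I would show that every column is monochromatic. If two vertices $u,v$ of one column had distinct colors, then since $s\ge\ell$ there are at least $\ell-2$ further colors, and selecting one vertex of each extends $\{u,v\}$ to a rainbow $\ell$-set containing two vertices of a common column, i.e.\ a rainbow $C$-edge — a contradiction. Once all columns are monochromatic, the auxiliary observation (applied to the class of a color used on two columns, which then has at least $am\ge m$ vertices and spans two rows and two columns) forces distinct columns to get distinct colors; hence the coloring is exactly a column-coloring and $s=b$. This simultaneously shows that no feasible value lies in $[\ell,\infty)\setminus\{b\}$ and that every proper $b$-coloring is a column-coloring.

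For the regime $s<\ell$ the $C$-edges impose nothing, as a rainbow $\ell$-set would need $\ell>s$ colors; only the $D$-edges act. Here the crucial device is a \emph{transversal counting} argument. Fixing any injection $\sigma\colon[a]\to[b]$ (possible since $b>a$), the $a$ cells $\{x\}\times\{\sigma(x)\}\times Z$ are pairwise in distinct rows and distinct columns; by the auxiliary observation any color occupying two of them has a class of size $<m$, hence at most $m-1$ of these diagonal vertices, while any color occupying a single one contributes at most $m$. As the $a$ transversal cells hold $am$ vertices, at least $a$ distinct colors appear on them, giving $s\ge a$; since the row-coloring shows $a\in S(H)$, Lemma~\ref{lem:easy-facts} yields $[a,\ell-1]\subseteq S(H)$, and together with the column-coloring ($b\in S(H)$) this produces $S(H)=[a,\ell-1]\cup\{b\}$. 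For a proper $a$-coloring the equality $s=a$ forces each diagonal color-contribution to equal $m$, so each diagonal cell is monochromatic and the $a$ cells are distinctly colored; letting $\sigma$ range over all injections shows \emph{every} cell is monochromatic, whence every color class is a union of cells and so has at least $m$ vertices. By the auxiliary observation each class lies in a single row or a single column, and counting the $ab$ cells among $a$ such classes (using $b>a$, so row-type classes cover $\le b$ and column-type $\le a$ cells) forces all of them to be row-type and in fact full rows — i.e.\ a row-coloring.

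I expect the main obstacle to be the $s<\ell$ analysis, where the $C$-edges give no help and the bound must be extracted purely from the $D$-edge structure. The transversal counting idea — choosing $a$ diagonal cells in general position and bounding each color's contribution by $m$ — is the key, and the delicate point is upgrading it from the mere inequality $s\ge a$ to the full rigidity: deducing that every cell is monochromatic and then running the covering argument. The supporting grid lemma, though elementary, must be stated carefully so that the auxiliary observation is valid uniformly for all $m\ge 2$.
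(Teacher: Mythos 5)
Your argument is correct, and it splits into the same two regimes as the paper's proof, but the treatment of the sub-$\ell$ regime is genuinely different. For colorings with at least $\ell$ colors you argue exactly as the paper does: two colors inside a column plus $\ell-2$ further colors would give a rainbow $C$-edge, so all columns are monochromatic, and distinct columns must then receive distinct colors, forcing a column-coloring with exactly $b$ colors. The divergence is in the regime $s<\ell$. The paper proves $s\ge a$ by pigeonhole on the largest color class (a class of more than $bm$ vertices yields a monochromatic $D$-edge) and proves rigidity of proper $a$-colorings by induction on $a$: the largest class must be exactly one full row, which is peeled off so that induction applies to $H(\ell,m,a-1,b)$. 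You instead isolate a sharper auxiliary lemma --- any color class of size at least $m$ lies inside one row or one column, via your grid fact plus the definition of the $D$-edges --- then derive $s\ge a$ from a tight count over a diagonal system of $a$ cells, and extract rigidity from the equality case: every cell is monochromatic, each class is a union of cells confined to a row or a column, and the covering count $kb+(a-k)a\ge ab$ together with $b>a$ forces all $a$ classes to be full rows. Both routes are sound; the paper's induction is shorter, while your auxiliary lemma with threshold $m$ (rather than $bm$) is a cleaner single tool and also disposes, in one stroke, of the step ``monochromatic columns get distinct colors,'' which the paper treats only tersely. One small point: you use without proof that row- and column-colorings are proper (to get $a,b\in S(H)$); this is verified in the paper's construction section preceding the theorem, but a fully self-contained write-up should include that one-line check.
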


\begin{proof}
 First, we claim that $(V(H),\cD)$ is an $a$-chromatic hypergraph, and hence each proper coloring of $H$ uses at least $a$ colors.
 Indeed, if $H$ is colored with less than $a$ colors, then the biggest color class contains more than $b\cdot m$ vertices, and thus contains two vertices from distinct rows and columns. These two vertices together with some other $(m-2)$ vertices from that color class gives a monochromatic $D$-edge, a contradiction.
 
 Now observe that a row-coloring of $H$ uses exactly $a$ colors, which with Lemma~\ref{lem:easy-facts} implies that $[a,\ell-1] \subseteq S(H)$.
 Moreover, we claim that every proper coloring that uses exactly $a$ colors is a row-coloring.
 Indeed, we can argue by induction on $a$, with the case $a = 1$ being immediate.
 For $a \geq 2$, consider a proper $a$-coloring of $H$ and a largest color class $V'$. 
 Then $|V'| \geq bm$.
 If $V'$ contains two vertices from at least two different rows and different columns, these two vertices together with some other $(m-2)$ vertices from $V'$ give a monochromatic $D$-edge, a contradiction.
 Thus, $V'$ is contained in one fixed row and since $|V'| \geq bm$, it is actually equal to that row. Removing this row gives a proper $(a-1)$-coloring of $a-1$ rows, to which we can apply induction.
  
 A column-coloring of $H$ is a proper coloring using exactly $b$ colors. 
 In general, if each column of $H$ is monochromatic, then the columns must use distinct colors because of the $D$-edges.
 So, if all columns are monochromatic, then the coloring is a column-coloring and it uses exactly $b$ colors.
 If some column is not monochromatic and the total number of colors is at least $\ell$, then taking two vertices $v,v'$ of distinct colors from that column and $\ell-2$ vertices of distinct colors different from the colors of $v$ and $v'$ gives a rainbow $C$-edge, a contradiction. This concludes the proof.  
\end{proof}

\subsection{Construction of a hypergraph $H = H(\ell,m,a,b_1,b_2,\ldots,b_q)$, $1<a<\ell \leq b_1 \leq b_2 \leq \cdots \leq b_q$, and a coloring ${\bf c_i}(H)$ }\label{construction} {\ \\}
First consider the hypergraphs $H_1, H_2, \ldots, H_q$, where $H_i = H(\ell,m,a,b_i)$, $i=1,\ldots,q$, on pairwise disjoint sets of vertices.
If $q=1$, we define $H=H_1$. Otherwise, 
the hypergraph $H$ is the union of $H_i$'s, $i=1,\ldots,q$, and an additional set of $C$-edges, denoted $\cC_{add}$, where $\cC_{add} = \cC_1\cup \cC_2\cup \cC_3 \cup \cC_4$, is defined as below.
Let the vertex set of $H$ be $V$.
A subset of a row (respectively column) of some $H_i$, $1\leq i\leq q$, is called a \emph{transversal} of that row (respectively column) if it contains at most one vertex from every column (respectively row) of $H_i$.
We say that a triple of vertices $v, v', v'' \in V$ is \emph{special} in $H_i$, $1\leq i\leq q$, if $v$ and $v'$ are in the same row, and $v$ and $v''$ are in different rows but the same column of $H_i$.
We define $\cC_1,\cC_2,\cC_3,\cC_4$ implicitly by saying that for any $V' \in \binom{V}{\ell}$ we have
\begin{itemize}[leftmargin=2em]
 
 \setlength{\itemsep}{0.5em}
 \item $V' \in \cC_1$ if $V'$ contains a special triple of $H_i$ for some $1 \leq i \leq q$,
  
 \item $V' \in \cC_2$ if $V' = V_1 \dot\cup V_2$, $V_1$ is a transversal of size $a$ of the $1^{st}$ column of $H_i$ and $V_2 \subseteq V(H_{i+1}) \cup \cdots \cup V(H_q)$,   for some $1 \leq i \leq q-1$,

 \item $V' \in \cC_3$ if $V' = V_1 \dot\cup V_2$, $V_1$ is a transversal of size $\ell-2$ of the $1^{st}$ row of $H_i$ and $V_2$ consists of two vertices of the $1^{st}$ row of $H_{i+1}$, one of which has a column index that coincides with the column index of some vertex in $V_1$,  for some $1 \leq i \leq q-1$,   and

 \item $V' \in \cC_4$ if $V' = V_1 \dot\cup V_2$, $V_1$ is a transversal of size $\ell-2$ of the $k^{th}$ row of $H_i$  containing  a vertex from the $k^{th}$ column and $V_2$ consists of two vertices of the $1^{st}$ column of $H_j$,  one of which from the $k^{th}$ row of $H_j$,     for some $1 \leq i \leq q-1$, $1 \leq k \leq a$, $i < j \leq q$.
 
\end{itemize}

\begin{figure}[htb]
 \centering
 \includegraphics{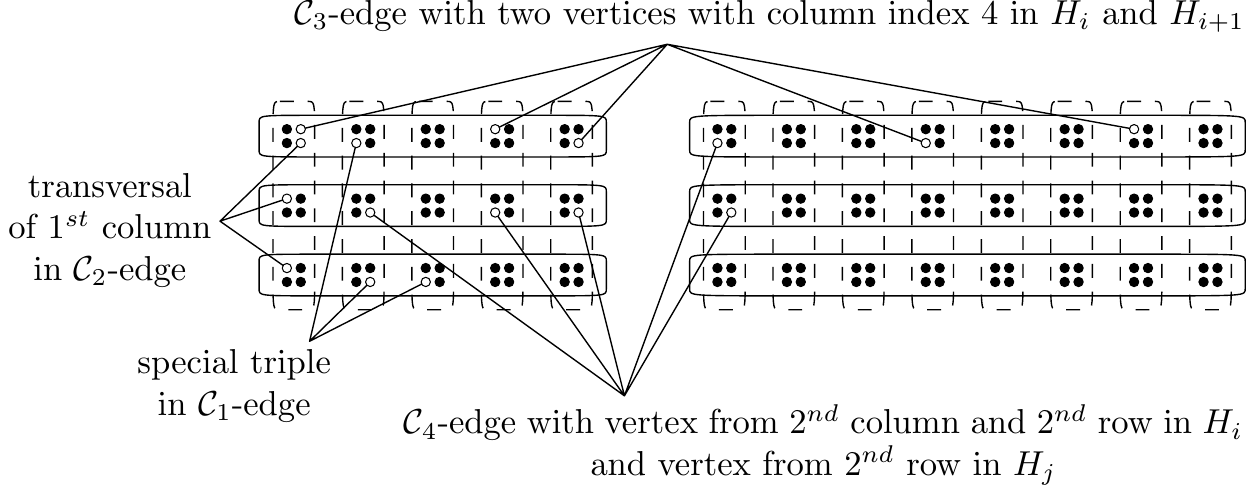}
 \caption{Illustration of $H(\ell,m,a,b_1,b_2)$ and one $\cC_1$-, $\cC_2$-, $\cC_3$- and $\cC_4$-edge, where $\ell = 5$, $m = 4$, $a=3$, $b_1 = 5$ and $b_2 = 8$.}
 \label{fig:C-add-edges}
\end{figure}

Now, we define a coloring ${\bf c_i}(H)$, $i=0,1,\ldots,q$.
If $i < q$ color $H_{i+1},\ldots,H_q$ with the row-colorings using color $k$ on the $k^{th}$ row, $k=1,\ldots,a$, and color the remaining hypergraphs (if $i \geq 1$) $H_1,\ldots,H_i$ with the column-colorings in which color $k$ is used on the $k^{th}$ column in each of these hypergraphs.
See Figure~\ref{fig:b_i-coloring}.

\begin{figure}[htb]
 \centering
 \includegraphics{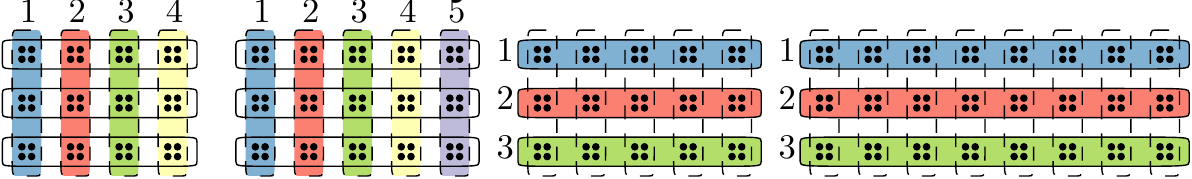}
 \caption{The coloring ${\bf c_2}(H)$ of $H(\ell,m,a,b_1,\ldots,b_q)$. Here $a = 3$, $\ell = 4$, $m = 6$, $q = 4$ and $(b_1,\ldots,b_q) = (4,5,5,8)$. Indices indicate the colors of corresponding rows or columns.}
 \label{fig:b_i-coloring}
\end{figure}

\begin{theorem}\label{thm:general}~\\
 For all $m \geq 2$ and $\ell \geq 3$ the $(\ell,m)$-uniform hypergraph $H= H(\ell,m,a,b_1,b_2,\ldots,b_q)$, $1 < a < \ell \leq b_1 \leq b_2 \leq \cdots \leq b_q$, has the feasible set $S(H) = [a,\ell-1] \cup \{b_1,b_2,\ldots,b_q\}.$
 
 Moreover, if $s = b_i$ for exactly $r(s)$ values of $i \in \{1,\ldots,q\}$, then $r_s(H) = r(s)$, $s \geq \ell$.
\end{theorem}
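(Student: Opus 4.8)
The plan is to prove both inclusions of $S(H) = [a,\ell-1] \cup \{b_1,\ldots,b_q\}$ together with the spectral count for $s \ge \ell$ in one sweep, by classifying all proper colorings that use at least $\ell$ colors. First I would settle the easy inclusion and the low end. Each $\mathbf{c}_i$ restricts to a row- or column-coloring on every block, and since all $D$-edges of $H$ lie inside a single $H_i$, Theorem~\ref{thm:block} already guarantees the $D$-edges and the block-internal $C$-edges are satisfied; it remains to check that each of the four families $\cC_1,\cC_2,\cC_3,\cC_4$ is non-rainbow under $\mathbf{c}_i$, which is a direct inspection of the defining configurations. As $\mathbf{c}_i$ uses exactly $b_i$ colors for $i \ge 1$ (and $a$ colors for $i=0$), this yields $\{a,b_1,\ldots,b_q\}\subseteq S(H)$, and Lemma~\ref{lem:easy-facts} promotes $a \in S(H)$ to $[a,\ell-1]\subseteq S(H)$. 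Finally $\min S(H)=a$, because the $D$-edges inside $H_1$ already force $a$ colors by Theorem~\ref{thm:block}, so no feasible value is lost at the bottom.

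For the reverse inclusion and the count, fix a proper coloring $c$ using $s \ge \ell$ colors. Restricting $c$ to $V(H_i)$ is a proper coloring of $H_i$, so Theorem~\ref{thm:block} forces each block to be of one of two types: \emph{column-colored}, using exactly $b_i$ colors, or \emph{small}, using at most $\ell-1$ colors. The first key step is to upgrade a small block to a genuine row-coloring: using that $c$ has at least $\ell$ colors, any special triple of $H_i$ must be non-rainbow, since otherwise it extends, along $\cC_1$, to a rainbow $C$-edge; combining this with the fact that a monochromatic $D$-edge forbids a color class of size at least $m$ from meeting two rows and two columns, I would conclude that a small block uses exactly $a$ colors and is therefore (again by Theorem~\ref{thm:block}) a row-coloring. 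The second key step reads the global shape off $\cC_2,\cC_3,\cC_4$: $\cC_2$ shows that a row-colored block cannot be followed by a column-colored one, so the column-colored blocks form a prefix $H_1,\ldots,H_t$ (with $t\ge 1$, as $t=0$ would give only $a<\ell$ colors) and the remaining blocks are row-colored; $\cC_3$ forces consecutive column-blocks to agree on the color of each common column; and $\cC_4$ forces the $k$-th row color of each row-block to coincide with the $k$-th column color of the column-blocks. After these alignments the color set is exactly that of the largest column-block, so $s=b_t$, and $c$ agrees with $\mathbf{c}_t$ up to relabeling.

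It then remains to count. The canonical colorings $\mathbf{c}_1,\ldots,\mathbf{c}_q$ are pairwise non-isomorphic, since the number of colors appearing inside a fixed block $H_j$, namely $a$ or $b_j>a$, is invariant under recoloring and distinguishes them, and $\mathbf{c}_t$ uses $b_t$ colors. Hence for every $s \ge \ell$ the proper $s$-colorings of $H$ are, up to isomorphism, exactly the $\mathbf{c}_t$ with $b_t=s$, which gives $S(H)\cap[\ell,\infty)=\{b_1,\ldots,b_q\}$ and $r_s(H)=\#\{t:b_t=s\}=r(s)$.

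I expect the main obstacle to be the structural analysis of the second paragraph: turning the four edge families into the rigid ``prefix of column-blocks, aligned row-blocks'' picture. The two delicate points are (i) promoting a small block to an \emph{exact} row-coloring rather than merely an $\le(\ell-1)$-coloring, since otherwise a spurious $b_t$-coloring would inflate $r_{b_t}(H)$ and break the count, and (ii) choosing, for each of $\cC_2,\cC_3,\cC_4$, the auxiliary vertices so that all colors except the one to be matched are pairwise distinct, which is exactly what lets a would-be misalignment be completed to a rainbow $C$-edge. These are the places where the specific sizes ($a$, $\ell-2$) and the transversal conditions in the definitions of $\cC_1$--$\cC_4$ are used.
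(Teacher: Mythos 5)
Your plan reproduces, in different packaging, essentially the paper's own proof: restrict the coloring blockwise and invoke Theorem~\ref{thm:block}; use rainbow special triples and the $\cC_1$-edges to show that once at least $\ell$ colors appear, every block is row- or column-colored (the paper's Claim~\ref{claim:C1}); use $\cC_2$ to make the column-colored blocks a prefix, $\cC_3$ to align consecutive column-colorings, and $\cC_4$ to align row-colorings with column-colorings (the paper's Claims~\ref{claim:C2}--\ref{claim:C4}); and finish by checking that $\mathbf{c}_0,\ldots,\mathbf{c}_q$ are proper and pairwise non-isomorphic (the paper's Claim~\ref{claim:C5} and its final counting argument). So the approach is the right one and all main ideas are present.

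There is, however, one misjustified step. To get $t\ge 1$ you argue that ``$t=0$ would give only $a<\ell$ colors.'' That is not true: when every block is row-colored, distinct blocks need not use the same $a$ colors, so the total can exceed $a$. Concretely, take $a=2$, $\ell=4$, $m=2$, $q=2$, $b_1=b_2=4$, and color the two rows of $H_1$ with colors $1,2$ and the two rows of $H_2$ with colors $2,3$. Every special triple, every $\cC_3$-edge and every $\cC_4$-edge contains two vertices of one row of a single block, hence is non-rainbow; a $\cC_2$-edge sees colors $\{1,2\}$ on its transversal part and colors from $\{2,3\}$ on its two remaining vertices, hence is non-rainbow; so this is a proper coloring with $3>a$ colors in which no block is column-colored. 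What is true, and what you actually need, is the quantitative form of your $\cC_2$ observation (the paper's Claim~\ref{claim:C2}): if $H_i$ is row-colored, then $\bigl|\bigcup_{j\ge i}c(H_j)\bigr|\le \ell-1$, since otherwise a size-$a$ transversal of the first column of $H_i$ extends by $\ell-a$ vertices of new, pairwise distinct colors from later blocks to a rainbow $\cC_2$-edge. Applying this with $i=1$ shows that $t=0$ is impossible once $s\ge\ell$, and with this correction the rest of your argument (alignment via $\cC_3$ and $\cC_4$, identification of $c$ with $\mathbf{c}_t$, and the count of non-isomorphic colorings) goes through exactly as in the paper.
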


\begin{proof}
 Let $c$ be a proper coloring of $H$. We shall denote by $c(H_i)$ the set of colors used on $H_i$, and by $c(H)$  the set of colors used on $H$. We shall show the following.
 
 \begin{claim}\label{claim:big}
  One of the following must happen up-to relabeling of the colors:
  \begin{itemize}
   \item $|c(H)| \leq \ell-1$, or
   
   \item there is an index $i$ such that $c = {\bf c_i}(H)$.
   
  \end{itemize}
 \end{claim}
 
 \noindent 
 We prove Claim~\ref{claim:big} through a series of claims, one for each type of $C$-edges in $\cC_{add}$.
  
 \begin{claim}\label{claim:C1}
 Either $a< |c(H)|\leq \ell-1$ or each $H_i$ is row- or column-colored. 
 \end{claim}
 
 \noindent
 Assume that for some $i \in \{1,\ldots,q\}$, $H_i$ is neither row- nor column-colored, then we have to show that $|c(H)| \leq \ell-1$.
 From Theorem~\ref{thm:block} it follows immediately that $a < |c(H_i)|\leq \ell-1$.
 First, we shall show that there is a rainbow special triple in $H_i$.
 Indeed, since $ |c(H_i)|>a$ and $H_i$ has $a$ rows, there are two rows $R_1, R_2$ with at least $3$ colors on then. 
 If each column in $R_1\cup R_2$ were  monochromatic, then since $|c(H_i)|\leq \ell-1$, there would be  two monochromatic columns there in the same color, and thus 
 a monochromatic $D$-edge. Thus,  there are vertices    $v_1\in R_1$, $v_2 \in R_2$,  $v_1, v_2$ from the same column,  $c(v_1)\neq  c(v_2)$.
 Then a vertex $v$  of a third color in $R_1\cup R_2$  forms a special rainbow triple together with $v_1$ and $v_2$. 
If  $|c(H)| \geq \ell$ then $\{v, v_1, v_2\}$  together with $\ell-3$ vertices of different colors not appearing on this triple form a rainbow $\cC_1$-edge, a contradiction.
Thus $|c(H)|\leq \ell - 1$.  

 \medskip
 
 From now on, we assume that each $H_i$ is row- or column-colored.
 
 \begin{claim}\label{claim:C2}
   If $H_i$ is row-colored, then $|\bigcup_{j=i}^q c(H_j)| \leq \ell-1$ and each of $H_{i+1},\ldots,H_q$ is also row-colored.
 \end{claim} 
 
 \noindent
 Assume that $H_i$ is row-colored and $|\bigcup_{j=i}^q c(H_j)| \geq \ell$. Then any transversal $V_1$ of the $1^{st}$ column of $H_i$ together with $\ell-a$ vertices of $\bigcup_{j=i+1}^q H_j$ having distinct colors different from the colors in $V_1$ forms a rainbow $\cC_2$-edge, a contradiction.  Since a column-coloring uses at least $\ell$ colors,  each of $H_{i+1},\ldots,H_q$ is  not column-colored, thus it is  row-colored.
 
  \begin{claim}\label{claim:C3}
  If $H_i$ and $H_{i+1}$ are column-colored, $1 \leq i < q$, then the colors of the $k^{th}$ column of $H_i$ and $H_{i+1}$ coincide, $1 \leq k \leq b_i$.
 \end{claim}

 \noindent
 Assume not. Let $v,v' \in V(H_{i+1})$ be from the $1^{st}$ row, and the $k^{th}$, $k'^{th}$ column, respectively, $k\neq k'$. Let $V_1$ be a transversal of the $1^{st}$ row of $H_i$ of size $ \ell-2$, containing a vertex $v''$ from the $k^{th}$ column and using none of $c(v), c(v')$. Such $V_1$ exists, since $b_i \geq \ell$ and $c(v'')\neq c(v)$. Then $V_1 \dot\cup \{v,v'\}$ is a rainbow $\cC_3$-edge, a contradiction.

 \begin{claim}\label{claim:C4}
  If $H_i$ is column-colored and $H_j$ is row-colored, $1 \leq i < j \leq q$, then the colors of the $k^{th}$ column of $H_i$ and the $k^{th}$ row of $H_j$ coincide, $1 \leq k \leq a$.
 \end{claim}

 \noindent
 Assume not. Let $v,v'\in V(H_j)$ be from the $1^{st}$ column and the $k^{th}$, $k'^{th}$ rows, respectively, $k\neq k'$. Let $V_1$ be a transversal of size $\ell-2$ of the $k^{th}$ row of $H_i$ containing a vertex from the $k^{th}$ column, and using none of $c(v), c(v')$. Since $b_i \geq \ell$, such a transversal exists. Then $V_1 \dot\cup \{v,v'\}$ is a rainbow $\cC_4$-edge, a contradiction.
 
 \medskip
 
 To confirm Claim~\ref{claim:big},   observe that Claim~\ref{claim:C1}  implies that either $|c(H)|\leq \ell-1$ or each $H_i$ is row- or column-colored. 
 Claims~\ref{claim:C2}, ~\ref{claim:C3},~\ref{claim:C4} confirm  that if each $H_i$ is row- or column-colored, then $c= {\bf c_i}(H)$, for some index $i$.
 To finish the proof we need one last claim.

 \begin{claim}\label{claim:C5}
  The coloring ${\bf c_0}(H)$ is a proper $a$-coloring and the coloring ${\bf c_i}(H)$ is a proper $b_i$-coloring, $i=1,\ldots,q$.
 \end{claim}
 
 \noindent
 Indeed, every special triple contains two vertices from the same column and two vertices from the same row, and hence no $\cC_1$-edge is rainbow.
 No $\cC_2$-edge is rainbow, since it either contains two vertices from the same column in a column-colored $H_i$, or is completely contained in a subgraph that is $(\ell-1)$-colored.
 No $\cC_3$-edge is rainbow, since it contains two vertices from $H_i$ and $H_{i+1}$, $1 \leq i < q$, with the same column index and two vertices from $H_{i+1}$ from the same row.
 No $\cC_4$-edge is rainbow, since it contains a vertex $v$ from $H_i$ and a vertex $v'$ from $H_j$, $1 \leq i < j \leq q$, where row index and column index of $v$ and row index of $v'$ coincide, and two vertices from $H_j$ from the same column.
      
 \bigskip
 
 Continuing with the proof of Theorem~\ref{thm:general},
 recall that $|c(H)|\geq \chi((V,\cD)) \geq a$.
 Claim~\ref{claim:big} implies that $S(H) \subseteq [a,\ell-1] \cup \{b_1,b_2,\ldots,b_q\}.$
 Indeed, either the number of colors is at most $\ell-1$ or $c(H) = {\bf c_i }(H)$, $|{\bf c_i }(H)|=b_i$,  $i=1,\ldots,q$.
 This also implies that $r_s(H) \leq r(s)$ for every $s \in \{b_1,\ldots,b_q\}$.
 
 It remains to show that $S(H) \supseteq [a,\ell-1] \cup \{b_1,b_2,\ldots,b_q\}$ and that $r_s(H) \geq r(s)$ for every $s \in \{b_1,\ldots,b_q\}.$ 
 When $s=a$, consider the coloring ${\bf c_0}(H)$, which is a proper $a$-coloring by Claim~\ref{claim:C5}.
 When $s\in [a,\ell-1]$, Lemma~\ref{lem:easy-facts} implies the existence of a proper $s$-coloring of $H$.
 When $s \in \{b_1,\ldots, b_q\}$, consider a maximal block of repeated $s$, i.e., take largest $j$ and smallest $i$ such that $b_i=b_{i+1} = \cdots = b_j =s$.
 So, $r(s) = j-i+1$.
 Consider the colorings ${\bf c_i}(H),\ldots,{\bf c_j}(H)$.
 Claim~\ref{claim:C5} implies that these are $r(s)$ pairwise non-isomorphic proper $s$-colorings of $H$.
 Thus, $r_s(H) \geq r(s)$ for every $s \in \{b_1,\ldots, b_q\}$ and $S(H) \supseteq [a,\ell-1] \cup \{b_1,b_2,\ldots,b_q\}.$ 
\end{proof}

\subsection{Proof of Theorem~\ref{thm:main}}

Having Lemma~\ref{lem:easy-facts}, Theorem~\ref{thm:block} and Theorem~\ref{thm:general}, we can finally prove our main result.

\begin{proof}
 Lemma~\ref{lem:easy-facts} gives the necessity of~\ref{enum:new-interval-1}--\ref{enum:new-general}. Note that if $|V(H)|\leq \ell-1$, then 
 $H$ contains only $D$-edges and then $S(H)$ is an interval.   It remains to construct a hypergraph with the desired properties for any given $S$ of type~\ref{enum:new-interval-1},~\ref{enum:new-interval},~\ref{enum:new-min-ell}, or  ~\ref{enum:new-general}.
 
 \medskip
 
 Let $S$ be of type~\ref{enum:new-interval-1}, i.e., $S=[1,x]$, $x\geq \ell-1$.  Consider a hypergraph $H = (V,\cC,\cD)$ with $\cD = \emptyset$, $|V|= x+1$, $v,v'\in V$, and $\cC$ consisting of all $\ell$-subsets of $V$ containing $v$ and $v'$.
 In every $s$-coloring with $s > x$ the vertex set is totally multicolored, so no $C$-edge is properly colored. On the other hand, we obtain a proper $s$-coloring of $H$ for $s \leq x$ by distributing $s$ colors arbitrarily, so that $v$ and $v'$ receive the same color.

 \medskip
 
 Let  $\ell = 2$ and $S$ be of type~\ref{enum:new-interval}, i.e., $S = [a,b]$.  Define a $(2,m)$-uniform hypergraph $H $  as follows.
 The vertex set $V(H)$ consists of $b$ disjoint sets $V_1,\ldots,V_b$ of $m$ vertices each, and $\cC$ is the set of all pairs of vertices from the same $V_i$, $i=1,\ldots,b$.
 For $a>1$, we define $\cD$ to be the set of all $m$-subsets of $V(H)$ containing vertices from exactly two $V_i$'s  with $i \in \{1,\ldots,a\}$, and no vertex from $V_{a+1} \cup \cdots \cup V_b$.
 For $a=1$, we define $\cD = \emptyset$. Then it is easy to verify that $S(H)=[a,b]$.
 
 \medskip
 
 Let $\ell \geq 3$ and $S$ be of type~\ref{enum:new-min-ell}, i.e., $\min (S) \geq \ell$ and let $r : S \to \Nat \setminus 0$ be a given function. 
 Let $b_1\leq b_2\leq \cdots \leq b_q$ be nonnegative integers such that  $\{b_1, \ldots, b_q\} = S$ and each $b\in S$ be  repeated in the list exactly $r(b)$ times. 
 For example, when $S=\{3, 5\}$, $r(3)=2, r(5)= 1$, then $(b_1, b_2, b_3)=(3,3, 5)$.

 Let   $H$ be a graph formed by adding  an  extra $D$-edge  to the part $H_1$ of  $ H'= H(\ell,m,\ell-1,b_1,\ldots,b_q)$,     where this edge is contained in a row of    $H_1$,  but not in any of its  columns. 
  By Theorem~\ref{thm:general},  $H'$ has feasible set $\{\ell-1\} \cup S$ and there are exactly $r(s)$ proper $s$-colorings of $H'$ for every $s\in S$.
 By Theorem~\ref{thm:block},  every proper $(\ell-1)$-coloring of $H'$ is a row-coloring,  thus $H$ is no longer $(\ell-1)$-colorable because of the extra $D$-edge.
 On the other hand, the $b_i$-coloring ${\bf c_i}(H-\{D''\})$  is still a proper coloring of $H$,  $i=1, \ldots, q$, which implies that $S(H) = S$ and $r_s(H) = r(s)$ for every $s \in S$.
 
 \medskip
 
 Finally consider the case when $\ell \geq 3$ and $S$ is of type~\ref{enum:new-general}, i.e., $S = [a,\ell-1] \cup S'$ for some $a \in [2,\ell-1]$ and $S' = \emptyset$ or $\min(S') \geq \ell$. Let  $r : S \to \Nat \setminus 0$ be a given  function. For $S' \neq \emptyset$, consider the hypergraph $H = H(\ell,m,a,b_1,\ldots,b_q)$, where every $s \in S'$ appears exactly $r(s)$ times in the list $(b_1,\ldots,b_q)$. By Theorem~\ref{thm:general}, $H$ has the desired properties.   For $S' = \emptyset$,  let $H$ be obtained from   $H'= H(\ell,m,a,b)$  with some $b$,  $b \geq \ell$, by adding  an extra $D$-edge that
  is   contained in a column of $H'$, but not in a row of $H'$.
 Since by Theorem~\ref{thm:block}  every proper $b$-coloring of $H'$ is a column-coloring, $H$ is no longer $b$-colorable.
 On the other hand, the row-coloring using color $k$ on $k^{th}$ row, $k=1, \ldots, a$,  of $H'$ is a proper $a$-coloring of $H$, which implies that $S(H) = [a,\ell-1]$.
\end{proof}

\end{document}